\newcommand\polya{{P\' olya}}
\newcommand\vecX{{\bf X}}
\newcommand\Prob{{\mathbb P}}
\newcommand\E{{\mathbb E}}
\newtheorem{cor}{Corollary}[section]
\newtheorem{theorem}{Theorem}[section]
\newtheorem{rem}{Remark}[section]
\author[Srinivasan Balaji and Hosam Mahmoud]{Srinivasan Balaji\affiliationmark{1}
  \and
Hosam Mahmoud\affiliationmark{1}}
\title[Urn-driven random walks]{Urn-driven random walks}
\affiliation{
Department of Statistics, The George Washington University, Washington,  U.S.A. 
}
\keywords{\polya\ urn, Friedman urn,  random walk, exchangeability, Markov chain, 
Eulerian number, Catalan number}
\begin{document}

\publicationdata{vol. 28:2}{2026}{24}{10.46298/dmtcs.15546}{2025-04-23; 2025-04-23; 2026-01-12}{2026-03-25}
\maketitle

\vskip 0.25cm

\begin{abstract}
The simple symmetric random walk is known to be recurrent in one and two dimensions, and becomes transient in three or higher dimensions. We compare the simple symmetric random walk to walks driven by certain  \polya\ urns. We show that, in contrast, if the probabilities of the random walk are instead driven by a \polya-Eggenberger urn, the states are recurrent only in one dimension.

Further consideration of exchangeability reveals that the walk is null recurrent.
As soon as the underlying Markov chain of \polya\ walk
gets in two dimensions or higher, there is a positive 
probability that the walker gets lost in the space, and the probability of her recurrence is less than 1.

On the other hand, a walk driven by Friedman urn behaves like the simple symmetric random walk, being recurrent in one and two dimensions and transient in higher dimensions.
As Friedman urn scheme is not exchangeable, it is considerably harder to determine the nature of
the recurrence in one and  two dimensions. Empirical evidence through simulation suggests that
in one dimension Friedman walk is positive recurrent. 
\end{abstract}
\section{Introduction}
The simple symmetric random walk in one dimension (the horizontal axis) 
progresses as follows. A particle starts out at the origin. At each unit of discrete time, the particle moves \emph{forward} one unit of distance in the positive direction, or moves \emph{backward}   
one unit of distance in the negative direction with equal probability.
If after a number of moves the particle is at position $i \in \mathbb Z$, we say 
the random walk (Markov chain) is in state $i$. We denote the state after~$n$ moves by $X_n$. 
It is well known that the states of the chain are \emph{recurrent}, that is, the particle's  
probability of returning to a specific state infinitely often is~1.   
A necessary and sufficient condition for this is 
$$\sum_{n=0}^\infty \Prob(X_n = 0) = \infty.$$
If a state is not recurrent, it is called \emph{transient}.
Now, a random walk in~$d$ dimensions 
arises by having a particle 
in $d$ dimensions changing one coordinate by a unit (positive or negative) at each point in discrete time, all moves being equally likely.
For instance, in three dimensions, the particle chooses one of six possible displacements, which are namely, east, west, north, south, up or down, and each 
move has probability $\frac 1 6$.
   It is well known that the 1-dimensional and 2-dimensional random walks are recurrent,
but the $d$-dimensional random walks, for $d\ge 3$, are all transient. 
This is a classic result first published by \polya\  (see~\cite{Polya}).

There is a variety of new ways to generate a random walk. For instance, the elephant random walk is in the vogue. This walk was introduced in~\cite{Trimper}. In recent years, numerous studies
have been published on it; see for example~\cite{Bercu,Bertoin,Shuo}.

Our purpose is to compare the recurrence of the states, if the random walk follows a different probability model driven by a symmetric urn scheme, such as moving according to 
a \polya-Eggenberger\ urn scheme or a Friedman urn scheme.  
\section{\polya\ random walk}
First, a word about \polya\ urns is in order to set  up the random walk.

A \polya-Eggenberger urn scheme is a system comprised of an urn that contains
balls of up to two colors, which we call white and blue, and rules for drawing
and reactions.
We execute a sequence of draws in discrete time units.
At each unit of time, a ball is drawn from the urn at random. The ball is returned 
to the urn, together with a ball of the same color.

Let $W_n$ and $B_n$ be respectively the number of white draws and blue draws
among $n$ draws. Hence, 
after $n$ draws, we have $W_n + B_n = n$.
We suppose the urn starts with $w>0$ white balls and $b>0$ 
blue balls. Therefore, after~$n$ draws, the urn has $W_n+w$ white balls and
$B_n+b$ blue balls. 
Note that we excluded the cases $w=0$ or $b=0$, as they lead to degeneracy. 

We associate with the \polya-Eggenberger urn scheme a linear random walk in one dimension, 
which we take to be the horizontal axis.
At the beginning of time, a particle is placed at the origin; i.e., the walk starts at the origin. After each drawing, the particle moves one unit of distance in the positive direction, if the ball drawn is white,
or one unit of distance in the negative direction, if the ball drawn is blue.
Thus, right after the  $n$th draw, the probability of moving forward is $(W_n+w)/  (n+ w + b)$,
and the probability of moving backward is $(B_n+b)/  (n+ w + b)$.

Let $X_n$ be the position of \polya\ random walk
after $n$ draws. \polya\ walk is in a state specified by the position of the moving 
particle and the number of draws from each color. After $r$ white draws and
$s$ blue draws, we have $X_n =x$, for some $x\in \mathbb Z$.  Consider this as the state $(x,r,s)$.
At this point, the number of white balls in the urn is $w+r$ and the number of blue balls 
in the urn is $b+s$.
Transition into the state $(x+1, r+1, s)$ occurs with probability $(w+r)/(n+w+b)$ and
transition into the state $(x-1,r,s+1)$ occurs with probability $(b+s)/(n+w+b)$. 

Note that in the state
$(x,r,s)$ the numbers $r$ and $s$ are determined from the relations 
$$r+s=n, \qquad x = r-s. $$
In other words, we have $r = r(x) = (n+x)/2$ and $s=s(x)=(n-x)/2$. So,
all the information about the state is latent in $x$. Therefore, we can 
consider \polya\ walk as a
Markov chain on $X_n$. With the
appropriate number of draws, we can move from any state to any other. So, this Markov chain is irreducible.

Let us dwell on calculating $\Prob(X_n = 0)$, the probability 
of returning to the origin after $n$ moves. 
Clearly, this probability is 0,
after an odd number of moves. The particle is at the origin, if it made an equal
number of forward and backward moves.
Since $B_{2n} = 2n - W_{2n}$,
we have
$$\Prob(X_{2n} = 0)  = \Prob(W_{2n} = n, B_{2n} = n).$$

The main results need an asymptotic estimate for the exact order of 
$\Prob(X_{2n} = 0) $. As we shall see in the proofs, this probability turns out
to be of the exact order $1/n$.
\begin{theorem}
\label{Theo:walk}
In the one-dimensional \polya\ random walk state 0 is null recurrent.
\end{theorem}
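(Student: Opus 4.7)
The plan is to prove Theorem~\ref{Theo:walk} by verifying the divergence criterion $\sum_n \Prob(X_n=0) = \infty$ recalled in the introduction (yielding recurrence of state $0$) together with $\Prob(X_n=0) \to 0$ (ruling out positive recurrence), so that null recurrence remains as the only possibility.

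A parity argument eliminates odd times: $\Prob(X_{2n+1}=0)=0$. At even time $2n$, the walker sits at the origin exactly when the urn has produced $n$ white and $n$ blue draws, so
$$\Prob(X_{2n}=0) = \Prob(W_{2n}=n).$$
The random variable $W_{2n}$ has the classical Beta-binomial distribution associated with the \polya-Eggenberger urn, and evaluating at the central index yields
$$\Prob(W_{2n}=n) = \binom{2n}{n} \frac{\Gamma(w+n)\,\Gamma(b+n)\,\Gamma(w+b)}{\Gamma(w)\,\Gamma(b)\,\Gamma(w+b+2n)}.$$
This formula can be obtained by direct counting of drawing histories or, more conceptually, by conditioning on the de Finetti $\textrm{Beta}(w,b)$ mixing variable $U$ for the exchangeable sequence of draws and integrating the binomial weights against the Beta density.

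From here, asymptotic analysis takes over. Using $\binom{2n}{n} \sim 4^n/\sqrt{\pi n}$ together with the standard ratio $\Gamma(n+\alpha)/\Gamma(n) \sim n^\alpha$ applied with $\alpha = w$, $\alpha = b$, and $\alpha = w+b$ (absorbing the residual $\Gamma(n)^2/\Gamma(2n)$ via the duplication formula), the factors of $4^n$ cancel and one obtains the exact order $1/n$,
$$\Prob(X_{2n}=0) \sim \frac{\Gamma(w+b)}{2^{\,w+b-1}\,\Gamma(w)\,\Gamma(b)} \cdot \frac{1}{n}.$$
The series $\sum_n \Prob(X_{2n}=0)$ therefore diverges like the harmonic series, establishing recurrence, while the individual probabilities decay to $0$, excluding positive recurrence. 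Together these give null recurrence of state $0$.

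I expect the main obstacle to be bookkeeping rather than concept: the Stirling computation must be carried out carefully so that all the $4^n$ factors cancel against the one from $\binom{2n}{n}$ and no stray $n^{w+b}$ is left over, leaving exactly a $1/n$ leading order. A secondary point worth flagging in the write-up is that, since the chain's transition probabilities depend on time through the urn composition, ``null recurrent'' here is to be understood via the two displayed conditions $\sum_n \Prob(X_n=0)=\infty$ and $\Prob(X_n=0)\to 0$ rather than via the classical finite-mean-return-time criterion for time-homogeneous Markov chains.
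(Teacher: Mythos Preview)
Your recurrence half matches the paper: both compute $\Prob(X_{2n}=0)$ from the \polya\ (beta--binomial) distribution, extract the exact order $1/n$ via Stirling, and conclude $\sum_n\Prob(X_n=0)=\infty$. The paper writes out explicit two-sided Stirling inequalities where you quote Gamma-ratio asymptotics, but the content is the same.

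The null-recurrence half is where you part ways. The paper does \emph{not} argue via $\Prob(X_n=0)\to 0$; it proves $\E[H_0]=\infty$ directly. The key extra idea is exchangeability: every length-$2n$ draw sequence with $n$ white and $n$ blue draws has the same probability $\langle w\rangle_n\langle b\rangle_n/\langle w+b\rangle_{2n}$, so $\Prob(H_0=2n)$ reduces to a path count. First-return paths starting with a white draw are enumerated by the Catalan number $\tfrac{1}{n}\binom{2n-2}{n-1}$ (balanced parentheses with strictly positive proper prefixes), giving
\[
\Prob(H_0=2n)=2\,\frac{\langle w\rangle_n\langle b\rangle_n}{\langle w+b\rangle_{2n}}\cdot\frac{1}{n}\binom{2n-2}{n-1},
\]
and a Stirling estimate then shows $\E[H_0]=\sum_n 2n\,\Prob(H_0=2n)=\infty$. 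This delivers null recurrence in the classical infinite-mean-return-time sense. Your shortcut ``$\Prob(X_n=0)\to 0$ rules out positive recurrence'' leans on a theorem for time-homogeneous chains that, as you yourself flag, is not available here; declaring that ``null recurrent'' should instead \emph{mean} the pair of conditions you verify is a change of target rather than a proof of the statement as the paper intends it. If you want to align with the paper, the missing ingredient is the Catalan/exchangeability computation of $\Prob(H_0=2n)$.
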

\begin{proof}
The joint
distribution of the number of white and blue draws is the well-known \polya\ distribution; see~\cite{Kotz} or~\cite{Mahmoud}. 
\polya\ distribution is conveniently described in terms 
of Pochhammer's symbol for the rising factorial, defined by
$$ \langle x \rangle_s = x(x + 1)(x + 2)\cdots(x + s - 1), $$
for any $x \in \mathbb{R}$, and any integer $s \ge 0$, with the interpretation that $\langle x \rangle_0 = 1$. 

According to \polya\ distribution, we have
\begin{align*}
\Prob(X_{2n} = 0) &= \frac {\langle w \rangle_n \langle b \rangle_n}{
   \langle w+b\rangle_{2n}} \, {2n \choose n}\\ 
&= K \, \frac {(n+w-1)!\,  (n+b-1)!}{
   (2n+w+b-1)!} \times \frac { (2n)!}   
   {(n!)^2},
\end{align*}     
where $K := (w+b-1)! / ((w-1)!\, (b-1)!)$.
This formula is a special case of \polya\ distribution  (also called the beta–binomial distribution), which first appeared in~\cite{Eggenberger}. Specialized to  the case of~$n$ white draws and $n$ blue draws, the formula follows from the exchangeability
of the indicators of the draws. So, one can just lump all $n$ indicators of white draws at the beginning and count 
the number of sequences with $n$ white draws.

To find bounds on the probabilities, 
we employ the well-known Stirling approximation to the factorial: 
\begin{equation}
\Bigl(\frac n e\Bigr)^n \sqrt {2\pi n} \  \ \le \  \ n! \ \ \le \ \ \Bigl(\frac n e\Bigr)^n
 \sqrt {2\pi n}\, e^{\frac 1 {12n}}.
\label{Eq:Stirling}
\end{equation}

We first prove that $\Prob(X_{2n} = 0)$ is
$\Omega(1/n)$.\footnote{One says that a function $g(n)$ is $\Omega(h(n))$, if there exist a natural number $n_0$ and a positive constant $C$, such that $C |h(n)| \le |g(n)|$, for all $n\ge n_0$.} 
Using the bounds in~(\ref{Eq:Stirling}), we write
 \begin{align*}
\Prob(X_{2n} = 0) &\ge 
   K \, \frac {{(\frac{n+w-1} e)^{n+w-1}}  \sqrt {2\pi (n+w-1)} 
               \, {(\frac{n+b-1} e)^{n+b-1}}  \sqrt {2\pi (n+b-1)} }{
   {(\frac{2n+w+b-1} e)^{2n+w+b-1}}  \sqrt {2\pi (2n+w+b-1)}
                  \, e^{\frac 1 {12(2n+w+b-1)}}} \\
              &\qquad\qquad  {}\times \frac {(\frac{2n} e)^{2n}  \sqrt {4\pi n}}   
   {((\frac{n} e)^{n}  \sqrt {2\pi n}\, e^{\frac 1 {12n}})^2}.
\end{align*} 
Using the fact that $w\ge 1$ and $b \ge 1$ gives a simpler inequality, for $n\ge 1$, which is namely
\begin{align*}
\Prob(X_{2n} = 0) &\ge  Ke   \, \frac {(n^{n+w-1}  \sqrt  n\,)
                (n^{n+b-1}  \sqrt  n\, )}{
   {(2n+w+b-1)^{2n+w+b-1}\sqrt{2n+w+b-1}\, e^{\frac 1 {12}}}} \\
             &\qquad\qquad \qquad  {} \times \frac {(2n)^{2n}  \sqrt {2n}}   
   {(n^n  \sqrt{n}\, e^{\frac 1 {12}})^2} \\
   &\ge  \frac {K e^{\frac 3 4}} {2^{w+b-1}}\times \frac {1
                }{
   {n \, (1 +\frac{w+b-1}{2n})^{2n+w+b-1}}} \times \frac {\sqrt{2n}} 
      {\sqrt{2n + w +b -1}}.
\end{align*}
For $x\ge 1$ and every $\alpha > 0$, 
the function $\sqrt {x/ (x+\alpha)}$ is bounded below by $\sqrt {1/ (1+\alpha)}$, and the function
$(1 + \alpha/x)^{x+\alpha}$ is bounded above by $e^\alpha (1+\alpha)^\alpha$. We thus have the lower bound
$$
\Prob(X_{2n} = 0) 
   \ge  \frac {K  e^{\frac 3 4}} {2^{w+b-1}}\times \frac 1
	                {n e^{w+b-1} (w+b)^{w+b-1}} \times \frac 1 
      {\sqrt{1 + \frac 1 2 (w +b -1)}} =: \frac {C_1} n ;
$$
the coefficient $C_1 > 0$ collects all the constants.

Now, we have
$$\sum_{n=0}^\infty \Prob(X_{2n} = 0)  \ge \sum_{n=1}^\infty \Prob(X_{2n} = 0) 
   \ge   \sum_{n=1}^\infty \frac {C_1} n = \infty,$$
proving that state 0 is recurrent.

We next determine the kind of recurrence in \polya\ random walk.
 Starting at the origin, consider the variable $H_0$, the first hitting (return) 
 time to state 0. Represent the sequence of draws (moves) by a string of white ($W$) draws and
 blue ($B$) draws. For example $WWBWWWBBBB$ represents a sequence of draws from the urn corresponding to two white draws (two moves forward) followed by a blue draw (a backward move), etc. This sequence of moves returns the particle to the origin in ten transitions.
 
 The number of drawing sequences starting with a white draw and returning the particle to the origin in~$2n$ moves
 can be mapped onto left (W) and right (B) balanced parentheses, with the number of left parentheses exceeding the number of right parentheses till position $2n$ with  every prefix of length $k\le 2n-1$ having more left parentheses than right. 
 As an example, the given instance is mapped into (()((()))). 
 
 Let $Q_{2n}$ be the number of such formal expressions. 
 For instance, with $n=3$, we have the correspondence
\begin{equation}
 WW BWBB \implies (()()), \qquad WWWBBB\implies ((())). \label{Eq:Cat}
 \end{equation}
 We can count $Q_{2n}$ by outflanking any balanced expression of~$2n-2$ parentheses between an opening left and a closing right parenthesis. The first left parenthesis guarantees that, by the balancing of the outflanked expression, there will be more left than right parentheses, till position $2n-1$ and the right parenthesis at position $2n$ balances the expression. The number of well balanced parentheses of length $2n-2$ is well known to be the Catalan number ${2n-1 \choose n-1}/ n$; see~\cite[Section 1.3, pp.~12--14]{Stanley}. 
 
 In total we have  ${2n-2 \choose n-1}/ n$ drawing sequences returning the particle to the origin in $2n$ moves starting with a white draw. 
 In the running example, $Q_6 = {4\choose 2}/3 = 2$. There are two possible drawing sequences from the urn starting with a white draw, then returning the particle to the origin in six draws with the number of white draws exceeding the number of blue draws until time 5--two expressions of balanced parentheses with every prefix of length $k\le 5$ having more left parentheses than right---as shown in Display~(\ref{Eq:Cat}).
 
 By the exchangeability of the \polya\ urn scheme, all these sequences are equiprobable, with the probability of each being that of a sequence of $n$ $W$'s in the prefix and $n$ $B$'s in the suffix. Thus, we have
 $$\Prob(H_0 = 2n) = 2\, \frac {\langle w\rangle_n\langle b\rangle_n}
     {\langle w+b\rangle_{2n}}{2n-2 \choose n-2} \frac 1 n;$$
     note the doubling to take into account drawing sequences starting with $B$.  
Using the well-known asymptotics for the Catalan numbers~\cite[Section 2.3, pp.~35–37]{Stanley} and developments via
the Stirling approximation of the factorials (as was done in the first part of the
proof), we find
$$\Prob(H_0 = 2n) \sim 2 \sqrt \pi \, n \times \frac {4^{n-1}} {\sqrt{\pi}\, n^{3/2}} 
        = 2\,  \frac {4^{n-1}} { n^{1/2}}. $$
 Consequently, the average return time is
 $$\E[H_0] = \sum_{n=0}^\infty n\, \Prob(H_0 = n) 
      = \sum_{n=0}^\infty 2n\,\Prob(H_0 = 2n) \sim  \sum_{n=0}^\infty 
                     n\Big(\frac {4^{n}}{\sqrt n} \Big) = \infty. $$       
    Hence, state 0 in the \polya\ random walk is null recurrent.
\end{proof}

As all the states communicate, we have the following corollary.
\begin{cor}
\label{Cor:walk}
The one-dimensional \polya\ random walk is null recurrent.
\end{cor}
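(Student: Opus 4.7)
The plan is to promote Theorem~\ref{Theo:walk} from state~$0$ to every integer state by invoking the class property of null recurrence for an irreducible Markov chain. The paper has already noted that $\{X_n\}$ is irreducible on $\mathbb{Z}$: from any state $i$ one can reach any state $j$ in exactly $|j-i|$ steps, and each elementary step has positive probability because the urn always contains at least one ball of each color. Hence all of $\mathbb{Z}$ lies in a single communicating class, and it suffices to transfer the conclusions of Theorem~\ref{Theo:walk} across this class.

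I would then invoke the standard bracketing argument showing that recurrence, together with the finer dichotomy between null and positive recurrence, is a class property. Concretely, for any two states $i$ and $j$, fix finite paths $\gamma_{ij}$ from $i$ to $j$ and $\gamma_{ji}$ from $j$ to $i$, both of strictly positive probability. Wedging each excursion at $i$ inside an excursion at $j$ (and conversely) shows that the two sums $\sum_n \Prob(X_n=i\mid X_0=i)$ and $\sum_n \Prob(X_n=j\mid X_0=j)$ diverge together, and that $\E[H_i]$ and $\E[H_j]$ are simultaneously infinite. Applied to the two conclusions of Theorem~\ref{Theo:walk}, namely $\sum_n \Prob(X_{2n}=0)=\infty$ and $\E[H_0]=\infty$, this yields null recurrence at every integer state.

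The one point I would flag is that the \polya\ walk is time-inhomogeneous: the transition probability out of $x$ at time $n$ depends on $n$ through the urn composition, so the textbook class-property argument is not verbatim applicable. The exchangeability of the \polya\ draw sequence supplies the needed repair: the probability of a draw pattern between two hit times depends only on the multiset of colors drawn in between, not on their ordering or on the absolute time at which the pattern begins, which is exactly what the bracketing argument needs. I expect this to be the only mildly delicate point; once exchangeability is invoked, the deduction is routine bookkeeping.
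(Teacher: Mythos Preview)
Your approach is exactly the paper's: the corollary is deduced in a single sentence, ``As all the states communicate, we have the following corollary,'' invoking the class property of (null) recurrence for an irreducible chain. What you have added is an explicit sketch of the standard bracketing proof of that class property, plus a remark about time-inhomogeneity that the paper does not make.

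The flag you raise is legitimate---the transition probability out of position $x$ at time $n$ is $(w+(n+x)/2)/(n+w+b)$, which depends on $n$---but your proposed repair is not right as stated. You assert that ``the probability of a draw pattern between two hit times depends only on the multiset of colors drawn in between, not on \ldots\ the absolute time at which the pattern begins.'' That is false for the \emph{conditional} probabilities the bracketing argument actually uses: if the first visit to state $j$ occurs at time $m$, the urn then holds $w+(m+j)/2$ white and $b+(m-j)/2$ blue balls, so the conditional law of the next block of draws depends on $m$. Exchangeability only says that the \emph{unconditional} law of a full draw sequence depends on its multiset; it does not give time-translation invariance of conditional segments, which is what wedging excursions needs. (Indeed, by de~Finetti the \polya\ walk is a $\mathrm{Beta}(w,b)$-mixture of biased simple random walks, almost all of which are transient, so the ``returns infinitely often with probability $1$'' reading of recurrence actually fails; what Theorem~\ref{Theo:walk} and the corollary establish is the weaker criterion $\sum_n \Prob(X_n=j)=\infty$ together with $\E[H_j]=\infty$.) A clean way to extend those two statements from $0$ to an arbitrary $j$ is not bracketing but a direct repetition of the asymptotics in Theorem~\ref{Theo:walk}: replacing $n$ white draws by $(n+j)/2$ and carrying through the same Stirling estimates gives $\Prob(X_n=j)$ of exact order $1/n$ and $\Prob(H_j=2n)$ of order $1/n^2$ after the first visit to $j$, from which the corollary follows.
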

\section{\polya\ random walk in higher dimensions}
Let us transcend the linear motion and consider random walks in $d\ge 2$ dimensions. 
Suppose there are $d\ge 2$ independent 
\polya\ random walks. 
Denote the position of the $i$th walk by $X_n^{(i)}$, for $i=1, \ldots, d$. At time~$n$, the walk is at the point ${\bf X}_n = (X_n^{(1)}, \ldots, X_n^{(d)})$ of the $d$-dimensional space. Again, the particle cannot be at the origin unless it made
an even number of moves in each dimension.  
By the independence in each dimension,
the probability of being at the $d$-dimensional  origin ${\bf 0} = (\underbrace{0, 0, \ldots, 0}_{d\ \mbox{times}})$ is
$$  \Prob({\bf X}_{2n} = {\bf 0}) = \prod_{i=1}^d \Prob(X_{2n}^{(i)} = 0) . $$

\begin{rem} 
\label{Rem:rotation}
The $d$-dimensional random \polya\ walk does not place the moving
particle at the same positions as the simple symmetric $d$-dimensional random walk.
For instance, after one move, in the 2-dimensional \polya\ walk the particle can be at one of the four positions
$(\pm1,\pm 1)$, whereas the simple symmetric random walk places it at one of the four positions
$(1,0)$, $(0,1)$,$(-1,0)$ or $(0,-1)$. The two walks, however are equivalent via rotations and change of scale.
\end{rem}
Figure~\ref{Fig:walk} illustrates Remark~\ref{Rem:rotation}.
We can describe the path of \polya\ walk in two dimensions in terms of ordered pairs
 $(x,y) \in\{W,B\} \times \{W,B\}$
specifying the colors drawn from the first and second urns, respectively.

The simple symmetric random walk is generated by flipping two independent fair coins, with Heads($H$)
and tails ($T$) outcome. Going forward (backward)
in each dimension is achieved on a Heads (Tails) outcome.   
We can describe the path of the simple symmetric walk in terms of ordered pairs $(x,y)\in\{H,T\} \times \{H,T\}$
specifying the outcomes of the first and second coins, respectively.

The path of \polya\ walk in Figure~\ref{Fig:walk} (shown to the left) corresponds 
to the sequence of draws
$$(W,W), (W,W), (B,W), (B,B),(B,W),(B,W),(B,W),(W,W),$$
and the path of the simple symmetric random walk is generated by the flips
$$(H,H), (H,H), (T,H), (T,T),(T,H),(T,H),(T,H),(H,H).$$
The choice of the nearest neighbor is 
$$(H,H) = \mbox{east},\quad (T,H) = \mbox{north}, \quad(T,T) = \mbox{west},\quad (H,T) = \mbox{south}.$$

The probability of the shown path for \polya\ walk depends on the initial conditions in the two urns.
For instance, if the first urn starts out with one white ball and one blue ball and the second urn
starts out with two white balls and one blue ball, the path has probability 
\begin{align*}
&\Big(\frac 1 2 \times \frac 2 3 \times \frac 1 4 \times \frac 2 5\times \frac  3 6 \times \frac 4 7 \times \frac 5 8 \times \frac 3 9\Big)  
\Big(\frac  2 3 \times \frac  3 4\times \frac  4 5\times \frac 1 6\times \frac  5 7 \times \frac 6 8 \times \frac 7 9 \times \frac 8 {10}\Big)\\
&\qquad =\frac 1 {504} \times \frac1 {45}\\
&\qquad =\frac 1 {22680} \\
&\qquad \approx  0.0000441.
\end{align*} 
With the pair of dice being fair (unbiased), the same path in the two-dimensional 
simple symmetric 
random walk has probability $1/4^8 =1/655361 \approx 0.0000152588$.

Observe that the path in the left picture of Figure~\ref{Fig:walk} is  obtained from the one in the right by a $45$ degrees rotation in the anticlockwise direction, and a scaling by $\sqrt 2$.

\bigskip
\begin{figure}[bht]
\begin{center}
\begin{tikzpicture}[scale=0.6, >=stealth]
  \draw[->] (-5,0) -- (4,0) node[right] {$x$};  
  \draw[->] (0,-0.5) -- (0,7) node[above] {$y$};  
\draw[->] (5.5,0) -- (10,0) node[right] {$x$};
  \draw[->] (6,-0.5) -- (6,7) node[above] {$y$};  


\node at (0, 0) {$\bullet$};
\node at (1, 1) {$\bullet$};
\node at (2, 2) {$\bullet$};
\node at (1,3) {$\bullet$};
\node at (0, 2) {$\bullet$};
\node at (-1, 3) {$\bullet$};
\node at (-2, 4) {$\bullet$};
\node at (-3, 5) {$\bullet$};
\node at (-2, 6) {$\bullet$};
\coordinate (A) at (0,0);
\coordinate (B) at (1,1);
\coordinate (C) at (2,2);
\coordinate (D) at (1,3);
\coordinate (E) at (0,2);
\coordinate (F) at (-1,3);
\coordinate (G) at (-2,4);
\coordinate (H) at (-3,5);
\coordinate (I) at (-2,6);
\draw [thick] (A)--(B)--(C)--(D)--(E)--(F)--(G)--(H)--(I);

\node at (6, 0) {$\bullet$};
\node at (7, 0) {$\bullet$};
\node at (8, 0) {$\bullet$};
\node at (8,1) {$\bullet$};
\node at (7, 1) {$\bullet$};
\node at (7, 2) {$\bullet$};
\node at (7,3) {$\bullet$};
\node at (7, 4) {$\bullet$};
\node at (8, 4) {$\bullet$};
\coordinate (A) at (6,0);
\coordinate (B) at (7,0);
\coordinate (C) at (8,0);
\coordinate (D) at (8,1);
\coordinate (E) at (7,1);
\coordinate (F) at (7,2);
\coordinate (G) at (7,3);
\coordinate (H) at (7,4);
\coordinate (I) at (8,4);
\draw [thick] (A)--(B)--(C)--(D)--(E)--(F)--(G)--(H)--(I);

\end{tikzpicture}
\end{center}
  \caption{A \polya\ random walk (left); the corresponding simple symmetric 
  random walk (right).}
  \label{Fig:walk}
\end{figure}
\begin{theorem}
In the d-dimensional \polya\ random walk, with $d\ge 2$,
state~0 is transient.
\end{theorem}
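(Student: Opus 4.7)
The plan is to combine independence of coordinates with a matching upper bound for the one-dimensional return probability, and then conclude via Borel--Cantelli. By the product formula
$$\Prob({\bf X}_{2n} = {\bf 0}) = \prod_{i=1}^d \Prob(X_{2n}^{(i)} = 0)$$
already established, the origin will be transient as soon as I show that $\sum_n \Prob({\bf X}_{2n} = {\bf 0})<\infty$: the first Borel--Cantelli lemma applied to the events $\{{\bf X}_{2n}={\bf 0}\}$ then guarantees that the walker visits the origin only finitely often almost surely, which is the definition of transience.

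The proof of Theorem~\ref{Theo:walk} produced only the lower bound $\Prob(X_{2n}=0)\ge C_1/n$. The first step I would take is to establish the complementary upper bound $\Prob(X_{2n}=0)\le C_2/n$, with $C_2>0$ depending on $w$ and $b$, by running the same Stirling argument but swapping the roles of the two-sided bounds in~(\ref{Eq:Stirling}). Concretely, I apply the upper Stirling estimate to $(n+w-1)!$, $(n+b-1)!$ and $(2n)!$ in the numerator, and the lower Stirling estimate to $(2n+w+b-1)!$ and $(n!)^2$ in the denominator. The cancellations of the $n^n$- and $e^n$-type factors proceed exactly as in the lower-bound derivation; the factor $\bigl(1+\frac{w+b-1}{2n}\bigr)^{2n+w+b-1}$, now appearing effectively in the numerator, is uniformly bounded above by $e^{w+b-1}(w+b)^{w+b-1}$ for $n\ge 1$, the residual square-root ratio $\sqrt{(2n+w+b-1)/(2n)}$ is bounded above by a constant depending only on $w+b$, and each Stirling error factor $e^{1/(12m)}$ is at most $e^{1/12}$ for $m\ge 1$. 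Collecting all constants yields $\Prob(X_{2n}=0)\le C_2/n$.

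With both bounds in hand, independence gives $\Prob({\bf X}_{2n}={\bf 0})\le C_2^d/n^d$, and since $\sum_{n=1}^\infty 1/n^d<\infty$ for $d\ge 2$, Borel--Cantelli produces $\Prob({\bf X}_{2n}={\bf 0}\text{ i.o.})=0$, so the origin is transient. I do not foresee a serious obstacle: the argument is essentially a routine mirror of the proof of Theorem~\ref{Theo:walk}. The only mildly delicate point is the uniform control in $n$ of the Stirling error factors and of the $\bigl(1+\frac{w+b-1}{2n}\bigr)^{2n+w+b-1}$ term, both of which admit the explicit $n$-independent bounds noted above.
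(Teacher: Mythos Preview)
Your approach mirrors the paper's proof exactly: establish $\Prob(X_{2n}=0)\le C_2/n$ by reversing the roles of the Stirling inequalities used in Theorem~\ref{Theo:walk}, then exploit independence to bound $\sum_n \Prob({\bf X}_{2n}={\bf 0})$ by a convergent $p$-series for $d\ge 2$. One small slip: the factor $\bigl(1+\tfrac{w+b-1}{2n}\bigr)^{2n+w+b-1}$ still sits in the \emph{denominator} after swapping the Stirling bounds (only the $e^{1/(12m)}$ error factors move, not the main terms), so for an upper bound on the probability you need a \emph{lower} bound on it---the trivial estimate $\ge 1$ does the job.
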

\begin{proof}
To establish the result, we need an upper bound 
on $\Prob(X_{2n}^{(i)} = 0)$. The proof is very similar to what we encountered
in the proof of Theorem~\ref{Theo:walk} as an argument for the lower bound;
we only highlight the starting point and
minor differences. Using the bounds in~(\ref{Eq:Stirling}), we have
 \begin{align*}
\Prob(X_{2n} = 0) &\le 
   K \, \frac {{(\frac{n+w-1} e)^{n+w-1}}  \sqrt {2\pi (n+w-1)} 
               \, {(\frac{n+b-1} e)^{n+b-1}}  \sqrt {2\pi (n+b-1)} }{
   {(\frac{2n+w+b-1} e)^{2n+w+b-1}}  \sqrt {2\pi (2n+w+b-1)}} \\
              &\qquad\qquad  {}\times \frac {(\frac{2n} e)^{2n}  \sqrt {4\pi n}}   
   {((\frac{n} e)^{n}  \sqrt {2\pi n}} \times e^{\frac 1 {12(n+w-1)}} \times e^{\frac 1 {12(n+b-1)}}
   \times e^{\frac 1 {12\times 2n}}.
\end{align*} 
We can replace the three exponential functions involving $\frac 1 {12}$ by the uniform bound
$e^{\frac 1 {12} + \frac 1 {12} + \frac 1 {12\times 2}} = e^{\frac 5 {24}}$. From this point on,
the proof is almost identical to that of the upper bound in the proof of Theorm~\ref{Theo:walk}, reaching the conclusion that
$$\Prob(X_{2n} = 0) \le  \frac {C_2} n,$$
for some constant $C_2>0$, for all $n\ge 1$. 
It follows that
$$ \sum_{n=0}^\infty \Prob({\bf X}_{2n} = {\bf 0}) = \prod_{i=1}^d \Prob(X_{2n}^{(i)} = 0) 
           <    1 + C_2^d \sum_{n=1}^\infty
                 \frac 1 {n^d} < \infty.      $$  
state 0 is transient, for $d \ge 2$.     
\end{proof}   

As all the states communicate, we have the following corollary. 
\begin{cor}
The $d$-dimensional \polya\ random walk, with $d\ge 2$,
is transient.
\end{cor}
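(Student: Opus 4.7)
The plan is to invoke the standard class-property dichotomy for irreducible Markov chains: in an irreducible chain, transience (equivalently, recurrence) is a property shared by every state. Since the preceding theorem already establishes that state $\mathbf{0}$ is transient for $d\ge 2$, all that remains is to verify irreducibility of the $d$-dimensional Pólya walk on its effective state space and then quote the dichotomy.

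First I would check that the joint chain $\mathbf{X}_n = (X_n^{(1)},\ldots,X_n^{(d)})$ is irreducible on the sublattice of $\mathbb{Z}^d$ consisting of states whose coordinates all share the parity of $n$. The $d$ urns are independent, and the one-dimensional \polya\ walk has already been observed to be irreducible in the introduction to Section~2. Given any two compatible states $\mathbf{x}$ and $\mathbf{y}$, I would pick a common number of steps $n_0$ that matches each parity $y_i-x_i\bmod 2$, and then, independently in each coordinate, exhibit a sequence of draws of length $n_0$ that carries $x_i$ to $y_i$ with strictly positive probability (such a sequence exists by the one-dimensional irreducibility, since every white or blue draw has positive probability at every step). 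The joint transition probability is the product of these $d$ positive factors, hence positive.

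Next I would invoke the dichotomy: in an irreducible Markov chain, $\sum_n \Prob(\mathbf{X}_n = \mathbf{y}\mid \mathbf{X}_0 = \mathbf{y}) = \infty$ for some state $\mathbf{y}$ if and only if the same holds for every state. The theorem supplies the bound $\sum_n \Prob(\mathbf{X}_n = \mathbf{0}) < \infty$, so state $\mathbf{0}$ fails this criterion and is transient; irreducibility then propagates transience to every state, which is exactly the statement that the $d$-dimensional walk is transient.

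The only real obstacle is the minor bookkeeping caused by the period-2 behavior of each coordinate—the chain cannot reach $\mathbf{y}$ from $\mathbf{x}$ unless the $d$ parities $y_i-x_i\bmod 2$ all coincide. This is handled by choosing a bridging time $n_0$ with the correct parity and, if necessary, padding by an extra pair of opposite draws in any coordinate whose parity already matches, which the urn permits with positive probability at every step. Beyond this, no substantive analysis is needed, since both the transience input and the dichotomy output are off-the-shelf.
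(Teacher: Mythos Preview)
Your proposal is correct and follows exactly the paper's approach: the paper derives the corollary in one line, ``As all the states communicate, we have the following corollary,'' which is precisely your irreducibility plus class-property argument spelled out in more detail. Your extra bookkeeping about parity and padding is a careful elaboration of what the paper takes for granted, but there is no substantive difference in method.
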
   
\section{Friedman random walk}
We first say a word about Friedman urns in order to set up a corresponding random walk.

A Friedman urn scheme is a system comprised of an urn that contains white and blue balls,
and evolves according to rules of opposite reinforcement (unlike \polya-Eggenberger self-reinforcement strategy). We execute a sequence of draws in discrete time units. At each epoch, a ball is drawn from the urn at random, and then returned to the urn together with a ball of the opposite color.

It is the nature of Friedman's scheme to revert to a state of equilibrium, where in the long run we have an equal average number of balls of each color. This should be intuitively transparent---when one color dominates, the probability is high to pick balls of that color, with the net effect of compensating the deficit in the other color. This mean-reverting behavior induces a high probability of a small difference between the number of white balls and that of blue balls. A random walk driven by Friedman urn is therefore likely to stay near the origin. Such a walk stays recurrent in more dimensions than \polya\ random walk. 

A random walk driven by Friedman urn proceeds in the following way. We start with one white 
ball in the urn and a particle at the origin. At each epoch of discrete time, a ball is drawn from the urn and the rules of Friedman urn scheme are followed. The particle moves forward, if the 
color of the ball drawn is white, or moves backward,  if the 
color of the ball drawn is blue.

By time $n$, let $W_n$ ($B_n$) be the number of  times a white (blue) ball is picked,
and let $\widetilde W_n$ ($\widetilde B_n$) be the number of white (blue) balls in the urn.

In~\cite{Dumas}, the authors use an analytic method, to determine the probability distribution of 
$\widetilde W_n$, and for an urn starting with only one white ball find
$$\Prob(\widetilde W_n = k) = \frac {\big\langle {n \atop k} \big\rangle}{n!}, \qquad \mbox{for \ } k = 1, \ldots, n,$$
where 
$\big\langle {n \atop k} \big\rangle$ are the Eularian numbers of the first kind, that count the number of permutations of the set $\{1, 2, \ldots, n\}$ with exactly $k$ ascents 
(see~\cite[Section 6.2]{Graham}). For a broader discussion
of the analytic method for treating urns see~\cite{Gabarro}.    

Note the connections $\widetilde B_n = W_n$ and $\widetilde W_n + \widetilde B_n  = n+1$ , to 
have 
$$\Prob(W_n = k) = \Prob(\widetilde B_n = k) = \Prob(n+1 -  \widetilde W_n= k).$$  
As discussed, the random walk returns to the origin in $2n$ moves, if $W_{2n} = n$. 
So, we have
$$\Prob(X_{2n}=0) = \Prob(W_{2n} = n) =  \frac {\big\langle {2n \atop n+1} \big\rangle}{(2n)!}.$$             
The asymptotics of  $\langle {n \atop k} \big\rangle$, for fixed $k$, as $n\to\infty$, had been 
known for a long time. More recently,~\cite{Keller} completed the spectrum
by finding the asymptotics for all $k$. Actually,~\cite{Keller} develops the asymptotics of $A(n,k)$, which are the
number of breaking points between the ascents. In other words, we have $A(n,k) = \big\langle {n \atop k+1}\big\rangle$. For our purpose, we are seeking the asymptotics of 
$\langle {2n \atop n+1} \big\rangle = A(2n,n)$. By the symmetry of the Eulerian numbers, we have $A(2n,n) = A(2n, n-1) $. For even $n$, it is reported in~\cite{Keller} that
$$A\Big(n,  \frac 1 2 n -1\Big)  \sim \sqrt {12}\, \Big(\frac n e\Big)^n,
              \qquad \mbox{as\ } n \to \infty.$$ 
By an application of Stirling approximation to the factorial in our context, we find  
$$\Prob(X_{2n}=0) \sim \frac {\sqrt{12}\,\big( {\frac{ 2n}  e}\big)^{2n}}{\big({\frac {2n}  e}\big)^{2n}
\sqrt{2\pi (2n)}} = \sqrt{\frac 3 {\pi n}}.$$
\subsection{Friedman random walk in higher dimensions}
We can now consider a $d$--dimensional Friedman random walk, wherein the movement
in each dimension is driven by an independent Friedman urn.
At time~$n$, Friedman random walk is at the point ${\bf X}_n = (X_n^{(1)}, \ldots, X_n^{(d)})$ of the $d$-dimensional space. Again, the particle cannot be at the origin unless it made
an even number of moves in each dimension.  
By the independence in each dimension,
the probability of being at the $d$-dimensional  origin ${\bf 0} = (\underbrace{0, 0, \ldots, 0}_{d\ \mbox{times}})$ is
$$  \Prob({\bf X}_{2n} = {\bf 0}) = \prod_{i=1}^d \Prob(X_{2n}^{(i)} = 0) . $$
\begin{theorem}
\label{Thm:Friedman}
State {\bf 0} in the $d$-dimensional Friedman random walk, is recurrent for $d=1$ and $d=2$. With $d\ge 3$,
state 0 is transient.
\end{theorem}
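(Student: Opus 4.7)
The plan is to exploit the one-dimensional asymptotic $\Prob(X_{2n}=0) \sim \sqrt{3/(\pi n)}$ that was derived in the previous subsection from the Eulerian-number representation of $\Prob(\widetilde W_{2n}=n+1)$ together with Stirling's approximation. Since the $d$ coordinate urns run independently, the return probability factorizes:
\begin{equation*}
\Prob(\vecX_{2n} = \vecZero) = \prod_{i=1}^d \Prob(X_{2n}^{(i)} = 0) \sim \Bigl(\frac{3}{\pi n}\Bigr)^{d/2}\!,
\end{equation*}
so the series $\sum_n \Prob(\vecX_{2n}=\vecZero)$ behaves like the $p$-series $\sum n^{-d/2}$: it diverges for $d \in \{1,2\}$ and converges for every $d \ge 3$.

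For $d \ge 3$, transience follows immediately from the first Borel--Cantelli lemma: the events $\{\vecX_{2n}=\vecZero\}$ have summable probabilities and so occur only finitely often almost surely, which kills recurrence of state $\vecZero$.

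For $d = 1, 2$, I would invoke the divergence-implies-recurrence criterion stated in the Introduction and already used in the proof of Theorem~\ref{Theo:walk}: since $\sum \Prob(\vecX_{2n}=\vecZero) = \infty$, state $\vecZero$ is recurrent. To apply the criterion rigorously one works in the enlarged state space in which each coordinate position $X_n^{(i)}$ is augmented with the corresponding urn composition $\widetilde W_n^{(i)}$, so that the joint process is a genuine Markov chain on a countable state space and the hitting-time decomposition behind the sum criterion is available.

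The main obstacle, and the reason the paper's abstract flags the Friedman case as \emph{considerably harder} than the \polya\ case, is the lack of exchangeability. For \polya\ urns we could lump all drawing sequences with a fixed number of white draws and read off their common probability directly from the rising factorials; no such shortcut is available here, and the explicit count by Catalan numbers that closed the \polya\ proof is inapplicable. The route I would take to finish is to exploit the strong concentration $\widetilde W_n^{(i)}/(n+1)\to 1/2$ inherent in Friedman's mean-reverting dynamics and couple each coordinate of Friedman walk with a simple symmetric random walk so that the two processes agree outside a sparse set of times; the classical recurrence of the simple symmetric walk in $d \le 2$ would then transfer to Friedman walk. Making this coupling quantitative enough to preserve almost-sure recurrence, rather than merely the divergence of the expected return count, is the delicate technical step.
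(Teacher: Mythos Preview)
Your first three paragraphs are exactly the paper's proof: factor the return probability by independence, plug in the asymptotic $\Prob(X_{2n}=0)\sim\sqrt{3/(\pi n)}$, and read off divergence for $d\le 2$ and convergence for $d\ge 3$ from the sum criterion stated in the Introduction. The paper stops right there.

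Your fourth paragraph is a wrong turn born of a misreading. The phrase ``considerably harder'' in the abstract does not refer to Theorem~\ref{Thm:Friedman} at all; it refers to the \emph{type} of recurrence (null versus positive) in dimensions one and two, which is the subject of Section~5 and is left there as a conjecture supported only by simulation. No coupling with simple symmetric random walk is needed, or attempted, for Theorem~\ref{Thm:Friedman}; the divergence of $\sum_n \Prob(\vecX_{2n}=\vecZero)$ is taken as sufficient for recurrence, just as it was in the proof of Theorem~\ref{Theo:walk}. So drop the coupling program entirely.

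A smaller point: your worry about enlarging the state space to include the urn composition is also unnecessary here. With the Friedman urn started from a single white ball, the composition at time $n$ is determined by $X_n$ via $\widetilde W_n = 1 + (n-X_n)/2$ and $\widetilde B_n = (n+X_n)/2$, exactly parallel to the observation the paper makes for \polya\ walk. The chain on $X_n$ is (time-inhomogeneous) Markov without any augmentation.
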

\begin{proof}
In one dimension we have
$$\sum_{n=1}^\infty \Prob(X_n = 0)  = \sum_{n=1}^\infty \Prob(X_{2n} = 0)
   \sim \sum_{n=1}^\infty\sqrt{\frac 3 {\pi n}}= \infty,$$
and state 0 is recurrent.

In two dimension we have
$$\sum_{n=1}^\infty \Prob(\vecX_n = {\bf 0})  = \sum_{n=1}^\infty \Prob(\vecX_{2n} 
               = {\bf 0})
   \sim \sum_{n=1}^\infty \bigg(\sqrt{\frac 3 {\pi n}}\,\bigg)^2 = \infty,$$
and state $\bf 0$ is recurrent.
 
In $d \ge 3$ dimensions, we have
$$\sum_{n=1}^\infty \Prob(\vecX_n = {\bf 0})  = \sum_{n=1}^\infty \Prob(\vecX_{2n} = {\bf 0})
   \sim \sum_{n=1}^\infty \bigg(\sqrt{\frac 3 {\pi n}}\,\bigg)^d < \infty,$$
and the origin is transient.
\end{proof}  

As all the states communicate, we have the following corollary.
\begin{cor}
The $d$-dimensional Friedman random walk, is recurrent in one and two dimensions. In 
$d\ge 3$ dimensions, the walk 
is transient.
\end{cor}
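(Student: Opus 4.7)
The plan is to invoke the standard criterion that state ${\bf 0}$ of an irreducible Markov chain is recurrent exactly when $\sum_{n=1}^\infty \Prob(\vecX_n = {\bf 0}) = \infty$, and to feed into it the one-dimensional estimate $\Prob(X_{2n}=0) \sim \sqrt{3/(\pi n)}$ derived just above via the Eulerian-number representation of $\Prob(W_n = k)$ and Stirling's approximation. Since the odd-indexed probabilities vanish (each coordinate walk visits the origin only after an even number of its own steps), the question reduces entirely to the asymptotics of $\Prob(\vecX_{2n} = {\bf 0})$.

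Because the $d$ coordinate walks are driven by independent Friedman urns, the product identity $\Prob(\vecX_{2n} = {\bf 0}) = \prod_{i=1}^d \Prob(X_{2n}^{(i)} = 0)$ already recorded above the theorem applies directly. Substituting the one-dimensional asymptotic into the product gives
$$\Prob(\vecX_{2n} = {\bf 0}) \sim \Big(\frac{3}{\pi n}\Big)^{d/2}.$$
From this point the question becomes purely a $p$-series comparison. For $d=1$ the tail series behaves like $\sum n^{-1/2}$ and diverges; for $d=2$ it behaves like the harmonic series $\sum n^{-1}$ and diverges; for $d\ge 3$ the exponent $d/2 \ge 3/2$ exceeds $1$ and the tail converges. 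Recurrence of ${\bf 0}$ follows in the first two cases, transience in the third.

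There is essentially no serious obstacle once the one-dimensional asymptotic is granted; the only point to watch is that the symbol $\sim$ must be read as supplying two-sided bounds $c_1 n^{-d/2} \le \Prob(\vecX_{2n} = {\bf 0}) \le c_2 n^{-d/2}$ valid for all sufficiently large $n$, so that the lower bound drives divergence in the recurrent cases and the upper bound drives convergence in the transient one. The promotion from recurrence/transience of ${\bf 0}$ to that of the whole chain, used in the corollary, is then automatic because the coordinate-wise construction keeps the walk irreducible in each dimension.
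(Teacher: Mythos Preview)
Your proposal is correct and follows essentially the same route as the paper: product structure over independent coordinates, the Eulerian-number asymptotic $\Prob(X_{2n}=0)\sim\sqrt{3/(\pi n)}$, a $p$-series test, and finally irreducibility to pass from state ${\bf 0}$ to the whole walk. The only cosmetic difference is that the paper packages the $p$-series computation as Theorem~\ref{Thm:Friedman} and then deduces the corollary in one line from ``all states communicate,'' whereas you fold both steps into a single argument.
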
    
\section{Empirical evidence of positive recurrence}
The exchangeability in \polya\ urn scheme enables us to determine the nature
of \polya\ walk through the distribution of the number of white draws among~$n$ drawings;
the walk turns out to be null recurrent.
The lack of such exchangeability in Friedman urn scheme proves to be a major 
challenge to go through an argument similar
to the one used in the \polya\ random walk to determine the nature of the recurrence
in Theorem~\ref{Thm:Friedman}. 

Our strategy shifts to generating a conjecture through simulation of the 1-dimensional Friedman walk. The simulation is carried
out by the algorithm below. The algorithm receives $replica$, a predetermined number and simulates  Friedman's urn that many times. The algorithm stores the number of white draws in $W$ and the number of blue draws in $B$. The simulation starts after one draw. At the beginning of each replication,
the variables are initialized as
$$W\gets1; \qquad B \gets 0.$$ 
According to this initialization, there are more white draws than blue at the beginning of any replication.

The algorithm keeps going at simulating draws from a Friedman's urn till $W=B$, at which point
the corresponding walk returns to the origin. 
The algorithm employs the primitive $\bf random$, which generates a random number
uniformly distributed over the interval (0,1)---most modern programing languages and computing 
packages provide such a random number generator. If the generated number is less than 
$(B+1)/(B+W +1)$ (the proportion of white balls), we take the draw to be white, otherwise it is blue. 
The updates after each drawing are straightforward: For a white draw,
the algorithm increases the number of white draws $W$ by 1; otherwise, the algorithm increases the number of blue draws $B$ by 1. At the end of the $r$th replication, the total number of draws till the first 
return to the origin $W+B$ is stored at the $r$th position of the array $H$. 

After all the desired replications are carried out, the array $H$ contains statistical data on
$H_0$, the first hitting time of 0. The algorithm in pseudocode is given next.   

 \bigskip\noindent
\phantom{X} \hskip 0.5 cm {\bf for} $r$ {\bf from} 1 {\bf to} replica {\bf  do} \\
 \phantom{X} \hskip 1 cm {\bf  while} ($W \not = B$) {\bf  do} \\
\phantom{X} \hskip 1.5   cm      $U \gets {\bf  random}$ \\
\phantom{X} \hskip 1.5    cm     {\bf  if} $U < \frac {B+1} {W +B +1}$  {\bf then}   $W\gets W+1$ \\
   \phantom{X} \hskip 1.5  cm    {\bf  else}       $B\gets B+ 1$\\ 
   \phantom{X} \hskip 1 cm $H_r  \gets W+B$
 
 \bigskip
We can take the statistical average of the entries in the array
$H$ as an estimator $\overline{H_0}$ of $H_0$. 
We ran the algorithm with $replica = 10000$, and in each run we got a value of $\overline{H_0}$ ranging between 8 and 12.
Backed with this empirical evidence, it is our conjecture that the average first hitting time is 
finite and that  Friedman's random walk in one dimension is positive recurrent. 
A proof of this conjecture
remains a future research project.

A minor adaptation of the algorithm is needed for the 2-dimensional Friedman walk. We constructed 
such a simulation and  the results are not conclusive to determine the type of recurrences.
\acknowledgements
The authors thank two anonymous referees for constructive comments that 
improved the exposition.

\nocite{*}
\bibliographystyle{abbrvnat}
\bibliography{refs.txt}

\end{document}